\theoremstyle{plain}
\newtheorem{thm}{Theorem}[section]
\newtheorem{lem}[thm]{Lemma}
\newtheorem{prop}[thm]{Proposition}
\newtheorem{cor}[thm]{Corollary}
\newtheorem{ass}{Assumption}
\newtheorem{question}{Question}
\theoremstyle{definition}
\newtheorem{defn}[thm]{Definition}
\newtheorem{nota}[thm]{Notation}
\theoremstyle{remark}
\newtheorem{rem}[thm]{Remark}
\newtheorem{exmp}[thm]{Example}
\def\A{{\mathbf A}}
\def\Zc{{\mathscr Z}}
\def\P{{\mathbf P}}
\def\L{{\mathscr L}}
\def\H{\mathscr{H}}
\def\X{\mathscr{X}}
\def\Y{\mathscr{Y}}
\def\V{\mathcal{V}}
\def\E{\mathcal{E}}
\def\G{\mathcal{G}}
\newcommand{\F}{\mathbf{F}}
\newcommand{\eqdef}{\stackrel{\text{def}}{=}}
\renewcommand{\leq}{\leqslant} 
\renewcommand{\geq}{\geqslant} 
\newcommand{\red}{\textrm{red}}
\newcommand{\fract}[2]{\hbox{\leavevmode
\kern.1em \raise .5ex \hbox{\the\scriptfont0 $#1$}\kern-.1em }/
\hbox{\kern-.15em \lower .25ex \hbox{\the\scriptfont0 $#2$}}
}
\title[An upper bound on the number of rational points...]{An upper bound on the number of rational points of arbitrary projective varieties over finite fields}
\author{Alain Couvreur}
\address{INRIA \& LIX, UMR 7161, \'Ecole Polytechnique, 91128 Palaiseau Cedex, France}
\email{alain.couvreur@lix.polytechnique.fr}
\date{\today}
\begin{document}
\maketitle

\thispagestyle{empty}

\begin{abstract}
  We give an upper bound on the number of rational points of an arbitrary
  Zari\-ski closed subset of a projective space over a finite field $\F_q$.
  This bound depends only on the dimensions and degrees
  of the irreducible components and  holds for very general projective
  varieties, even reducible and non equidimensional. As a consequence, we prove
  a conjecture of Ghorpade and Lachaud  on the maximal number of rational points
  of an equidimensional projective variety.
\end{abstract}

\section*{Introduction}
Counting or finding bounds on the number of rational points of a variety
over a finite field is a classical task which arises naturally
in number theory, algebraic geometry or finite geometry.
This problem has motivated
the development of very elegant constructions of cohomology theories
called {\em Weil cohomologies}.
The work of Deligne \cite{deligne} on Weil conjectures
entails among others an upper bound on the number of
rational points of a smooth projective complete intersection $\X$
depending only on $q$ and the Betti numbers of $\X$ for the
\'etale cohomology \cite[Theorem 8.1]{deligne}.

If $\X$ is a smooth curve, this bound is nothing but 
the well-known Weil bound:
$$
|\X (\F_q)| \leq q+1+2g_{\X} \sqrt{q},
$$
where $g_{\X}$ denotes the genus of $\X$.
This bound was actually improved by Serre \cite{Serre_borne}
as
$$
|\X (\F_q)| \leq q+1+g_{\X} \lfloor 2 \sqrt{q} \rfloor.
$$
Subsequently, this bound has been extended to the case of singular curves
in \cite{AP2, aubryperret}.
% and for arbitrary complete intersections in \cite{Ghorpade_Lachaud}.
% These upper bounds depend only on the Betti numbers of the variety.
% In particular they do not depend on an embedding of the variety.
% On the other hand, one can list at list at least two other  methods to bound
% above the number of points of a curve embedded in a projective space.
Another approach due to St{\"o}hr and Voloch \cite{StohrVoloch}
is based on the
use of the Weierstrass points of the curve and provides upper bounds on
the number of rational points of smooth projective curves.
Finally, other upper bounds on the number of points of arbitrary plane curves
arise from purely combinatorial methods,
like Sziklai's bound for arbitrary plane curves \cite{Sziklai}
which was improved by Homma and Kim \cite{HommaKim} using St{\"o}hr--Voloch
bounds. 

For higher dimensional varieties,
Deligne's results hold for arbitrary smooth geometrically irreducible
projective varieties over a finite field.
In addition, a bound ``\`a la Weil'' for irreducible complete intersections
in a projective space is given in \cite{Ghorpade_Lachaud}. 
On the other hand, an upper bound for the number
of rational points of an arbitrary projective hypersurface $\X \subseteq \P^n$
depending only on the degree and dimension of $\X$
has been proved by Serre \cite{lettre} and independently by S{\o}rensen \cite{Sorensen_DiscreteMath}:
$$
|\X (\F_q)| \leq \delta q^{n-1} + \frac{q^{n-1}-1}{q-1},
$$
where $\delta$ denotes the degree of the hypersurface $\X$.
Serre's proof is combinatorial and based on a nice {\em double
counting} argument. Notice that an interesting alternative proof of this
bound has been recently proposed in \cite{DG}.
Further bounds on the number of rational points of
hypersurfaces of fixed dimension and degree and based on such combinatorial methods can be found for instance in \cite{Homma_Kim_hyp, Thas}.

% A major drawback of bounds ``\`a la Weil'' is that they
% give very good estimates for large values of the field cardinality $q$
% while they lack to be efficient for varieties over small fields.
% In addition, it is extremely difficult to get sharp bounds from
% Weil cohomologies in very general situations, like for abritrary
% varieties which may be reducible, singular and non equidimensionnal.

Generalisations of Serre's bound to projective subvarieties
of codimension larger than or equal to $1$ have also been studied
or conjectured. Actually, this problem can be considered
from two different point of views.
One can 
look for bounds on the number of rational points of a subvariety
of $\P^n$ either in terms of the degrees of a family of
defining polynomials or
in terms of its dimension (provided it is equidimensional)
and its degree as a subvariety of $\P^n$.

For the first point of view, a generalisation of Serre's bound has been
conjectured by Tsfasman and Boguslavsky  \cite{Bog}. Given a family
$F_1, \ldots, F_r$ of linearly independent homogeneous polynomials
of total degree $\delta$
in $\F_q [x_0, \ldots, x_m]$, the number of rational points of the corresponding
variety is conjectured to be bounded above by a quantity depending only
on $r, m$ and $\delta$.
This conjecture has been recently proved to be true
for all $r \leq m+1$ and $\delta < q-1$ by Datta and Ghorpade \cite{DG, DG2}.

For the second point of view,
Ghorpade and Lachaud raised a conjecture
\cite[Conjecture 12.2]{Ghorpade_Lachaud}
on the maximal number of rational points of an arbitrary complete intersection
in $\P^n$ depending only on its dimension, its
degree and the dimension of its ambient projective space.
This conjectural upper bound coincides with Serre's bound \cite{lettre} when the
variety is a hypersurface. This conjecture has been discussed 
more recently, in a survey paper of Lachaud and Rolland
\cite[Conjecture 5.3]{LR}.
%  who noticed (\cite[Remark 5.5]{LR})
% that this conjectural upper bound is reached by arrangements
% of linear varieties.

In the present paper, we prove a general upper bound for arbitrary
projective subvarieties of $\P^n$ possibly non equidimensional.
This bound depends only on $n$, and the degrees and dimensions
of the irreducible components of the variety.
From this bound and
considering the equidimensional case, we prove Ghorpade and Lachaud's
conjecture \cite[Conjecture 12.2]{Ghorpade_Lachaud}. This proves in particular
that Ghorpade and Lachaud's conjecture holds for equidimensional
projective varieties even if they are not complete intersection.

Our bound is proved by purely combinatorial methods
inspired by Serre's proof in \cite{lettre}. The context is however
more difficult since Serre's proof for hypersurfaces
consists in studying an incidence structure involving the intersections
of this hypersurface with hyperplanes. The point is that, for a hypersurface
$\mathcal{X}$ and a hyperplane $\mathcal{H}$ such that $\mathcal{H}
\nsubseteq \mathcal{X}$, then $\mathcal{H} \cap \mathcal{X}$
has codimension $1$ in $\mathcal{H}$.
In particular, no
irreducible component of the hypersurface is contained in the hyperplane.
In the case of an arbitrary variety,
the situation gets harder since some irreducible components can be
 contained in a hyperplane.
For this reason, our proof treats separately the case of a variety with no
irreducible components contained in a hyperplane and varieties having
some irreducible components which are contained in hyperplanes.

Finally, we discuss the sharpness of this bound.
In particular, we prove that in the equidimensional case, this bound is reached by some arrangements of linear varieties which we call \emph{flowers}.
We also leave as open questions, some further possible improvements.

\section{Notation and definitions}\label{sec:nota}

\subsection{Schemes and varieties}
In this article, we fix a finite field $\F_q$.
We denote respectively by $\A^n$ and $\P^n$ the affine and projective
space of dimension $n$ over $\F_q$ defined as
$$
\A^n = \textrm{Spec}\ \F_q [x_1, \ldots, x_n]\qquad
{\rm and}\qquad \P^n = \textrm{Proj}\ \F_q [x_0, \ldots , x_n].
$$
The dual of $\P^n$, which is the variety of hyperplanes of $\P^n$ is
denoted as $\check{\P}^n$.
Given a closed subscheme $\mathcal{S}$ of $\A^n$ or $\P^n$, we denote
by $\mathcal{S}_{\red}$ the reduced scheme supporting $\mathcal{S}$.

In this article, 
%unless otherwise specified,
an {\em affine  variety}
(resp. {\em projective variety}) denotes a closed reduced subscheme of $\A^n$
(resp. $\P^n$). Hence, such a scheme is always defined over $\F_q$.
In particular, in what follows and unless otherwise specified,
whenever we speak about a hyperplane or a
linear subvariety of $\A^n$ or $\P^n$ it is always defined over $\F_q$.
% Given a scheme $\mathcal{S}$ over $\F_q$, the base field extension
% $\mathcal{S} \otimes \overline{\F}_q$ is referred to as
% $\mathcal{S}_{\overline{\F}_q}$.
Finally, given two closed subvarieties $\X , \Y \subseteq \A^n$ or $\P^n$,
by the {\em intersection} $\X \cap \Y$ we always mean the {\em scheme theoretic}
intersection.

\subsection{Irreducibility equidimensionality}
An affine (resp. projective) variety is said to be {\em irreducible} if it is an integral scheme (see \cite[Chapter II.3]{H}),
or equivalently if its defining ideal is prime
in $\F_q[x_1, \ldots, x_n]$ (resp. $\F_q [x_0, \ldots, x_n]$).
We emphasise that, in what follows, unless otherwise
specified, ``irreducible'' means
``irreducible over $\F_q$''. In particular, an irreducible variety
needs not be absolutely irreducible.

% Remind that every variety $\X$ has an irredundant decomposition
% $$
% \X = \X_1 \cup \cdots \cup \X_s
% $$
% where the $\X_i$'s are irreducible and for all $i \neq j$, $\X_i
% \nsubseteq \X_j$. This decomposition is unique up to permutation.

A variety is said to be {\em equidimensional} if its irreducible components
all have the same dimension. A closed subscheme $\mathcal{S}$ of $\A^n$ or
$\P^n$ is said to be equidimensional if $\mathcal{S}_{\red}$ is.

\subsection{Degree}
The {\em degree} of a closed equidimensional subscheme of
dimension $d$ of $\P^n$ is $d!$ times the leading coefficient of its
Hilbert polynomial (for instance, see \cite[Chapter III.3]{EH}).
The degree of a closed subscheme of $\A^n$ is defined as the degree of its
projective closure.
For a more geometrical point of view, the degree
of an equidimensional projective variety $\X \subseteq \P^n$ is
the maximum possible number of points of a zero-dimensional
intersection of $\X \otimes {\overline{\F}_q}$ with a
linear subvariety of codimension $\dim \X$ in $\P^n \otimes {\overline{\F}_q}$.

\subsection{Dimension and degree sequences}
A general subvariety $\X \subseteq \A^n$ or $\P^n$ may be reducible and
non equidimensional. Therefore, it has an {\em irredundant decomposition}~:
$$
\X = \X_1\cup \X_2 \cup \cdots \cup \X_r
$$
where $\X_1, \ldots , \X_r$ are irreducible varieties
and for all $i \neq j$, $\X_i \nsubseteq \X_j$.
% The $\X_i$'s will be referred to as the {\em irreducible components}
% of $\X$.
For all $i \in \{1, \ldots, r\}$ the integers $d_{\X_i}$
and $\delta_{\X_i}$ denote respectively the dimension and the degree of $\X_i$.
% as a subvariety of $\A^n$ or $\P^n$.
% In what follows, we always assume that the $\X_i$'s are sorted so that 
% $d_1 \geq \ldots \geq d_{\X_r}$.
The sequences $(d_{\X_i})_{i\in \{1, \ldots ,r\}}$ and ${(\delta_{\X_i})}_{i \in \{1, \ldots, r\}}$ are referred to as the {\em dimension sequence} and the
{\em degree sequence} of $\X$.
It is worth noting that the $d_{\X_i}$'s need not be distinct.
For instance, when $\X$ is equidimensional, the $d_{\X_i}$'s are all equal.
% As explained earlier it is a deceasing sequence
% but non necessary strictly decreasing. For instance it is constant when $\X$
% is equidimensional.
% In the same manner, the sequence ${(\delta_{\X_i})}_i$ is referred to as the 
% {\em degree sequence}. 
Obviously, for all $i$, we have
$d_{\X_i} \geq 0$ and $\delta_{\X_i} \geq 1$.

More generally, a closed subscheme $\mathcal{S} \subseteq \A^n$ or $\P^n$
has an irredundant decomposition:
% arising from that of $\mathcal{S}_{\red}$:
$$
\mathcal{S} = \mathcal{S}_1 \cup \cdots \cup \mathcal{S}_r
$$
where ${(\mathcal{S}_1)}_{\red}, \ldots , {(\mathcal{S}_r)}_{\red}$
are the irreducible components of the irredundant decomposition
of $\mathcal{S}_\red$.
The dimension and degree sequences of $\mathcal{S}$
are defined in a similar fashion.
Notice that $\mathcal{S}$ and $\mathcal{S}_\red$ have the same dimension
sequence while for all $i$, $\deg \mathcal{S}_i \geq
\deg{(\mathcal{S}_i)}_{\red}$ with equality if and only if $\mathcal{S}_i$
is reduced.

Finally, we always denote by $D_{\mathcal{S}}$
the maximum dimension of a component of the irredundant decomposition
of $\mathcal{S}$, that is
$$
D_{\mathcal{S}} \eqdef \max \{d_{\mathcal{S}_1}, \ldots, d_{\mathcal{S}_r}\}.
$$
% \begin{rem}
%   It is worth noting that the dimension sequence is unique, while the degree
% sequence is not since it depends of how you sort the 
% components of same dimension. 
% \end{rem}

% XXXXXreprendre cette description: en dimension 0, inutile de considerer les composantes irreductibles. Peut-etre reunir les composantes par dimension??XXXXXXXXXXX

\subsection{Rational points}
A closed point of a scheme over $\F_q$ is said to
be {\em rational} or {\em $\F_q$--rational}
if its residue field is $\F_q$.
The set of rational points of a scheme $\Y$
is denoted as $\Y(\F_q)$.
When $\Y = \P^n$, we denote its number of points by:
$$
\pi_n \eqdef |\P^n (\F_q)| = \frac{q^{n+1}-1}{q-1} \cdot
$$
Moreover, for convenience sake, we set
$$
\pi_j \eqdef 0, \quad {\rm for\ all}\ j<0.
$$
Let us recall that
\begin{equation}
  \label{eq:elem}
  \forall n \geq 0,\qquad \pi_n = q\pi_{n-1} +1,
\end{equation}
which straightforwardly entails
\begin{equation}
  \label{eq:bidon}
  \forall k \geq \ell \geq 0,\quad \pi_{k}-\pi_{\ell} = q(\pi_{k-1}-\pi_{\ell -1}).
\end{equation}

\section{The affine case}

\begin{thm}\label{thm:affine}
  Let $\X \subseteq \A^n$ be an affine variety. Let $d_{\X_1}, \ldots, d_{\X_r}$ be its
dimension sequence and $\delta_{\X_1}, \ldots, \delta_{\X_r}$ its degree sequence.
Then, we always have
$$
|\X(\F_q)| \leq \sum_{i=1}^r \delta_{\X_i}q^{d_{\X_i}}.
$$
\end{thm}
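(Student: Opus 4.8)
The plan is to reduce the bound to a statement about a single irreducible affine variety and then to prove that case by a dimension induction, slicing with hyperplanes. More precisely, since $\X = \X_1 \cup \cdots \cup \X_r$ and $|\X(\F_q)| \le \sum_{i=1}^r |\X_i(\F_q)|$, it suffices to prove that for an irreducible affine variety $Y \subseteq \A^n$ of dimension $d$ and degree $\delta$ one has $|Y(\F_q)| \le \delta q^d$. I would induct on $d$. The base case $d = 0$ is immediate: an irreducible affine variety of dimension $0$ is a single point, which is rational only when its degree (over $\F_q$) is $1$, so $|Y(\F_q)| \le 1 = \delta q^0$ always holds.

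For the inductive step, fix an irreducible $Y$ of dimension $d \ge 1$ and degree $\delta$. The idea is to sweep $\A^n$ by a pencil of parallel affine hyperplanes. Choose a linear form $\ell \in \F_q[x_1,\dots,x_n]$ that is non-constant on $Y$ (possible since $\dim Y \ge 1$), and for each $a \in \F_q$ let $H_a = \{\ell = a\}$. These $q$ hyperplanes partition $\A^n(\F_q)$, hence $|Y(\F_q)| = \sum_{a \in \F_q} |(Y \cap H_a)(\F_q)|$. Because $\ell$ is not constant on the irreducible variety $Y$, no $H_a$ contains $Y$, so each scheme-theoretic intersection $Y \cap H_a$ has dimension $\le d-1$. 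The key classical fact I would invoke is that the degrees of these slices are controlled: the sum of the degrees of the (reduced) components of $Y \cap H_a$, summed over all $a$, is bounded by something like $\delta$ per slice, and more usefully $\sum_{a} \deg(Y \cap H_a)_{\red} \le \deg(Y \cap H)$ behaviour — concretely, for each individual $a$ the equidimensional-of-dimension-$(d-1)$ part of $Y \cap H_a$ has degree at most $\delta$ (intersecting an irreducible degree-$\delta$ variety with a hyperplane not containing it yields a scheme of degree $\le \delta$ in dimension $d-1$, by the projective degree being preserved and Bézout-type bounds; lower-dimensional components contribute negligibly to the count after applying induction).

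Applying the induction hypothesis to each slice $Y \cap H_a$ (using Theorem for varieties, i.e. the already-reduced statement on its dimension and degree sequences, which holds in dimension $\le d-1$), each contributes at most $\sum_i \delta_i q^{d_i}$ with all $d_i \le d-1$ and $\sum_i \delta_i \le \delta$ when we only keep the top-dimensional part, and strictly less for lower-dimensional parts since $q^{d_i} \le q^{d-1}$. Summing over the $q$ values of $a$ gives $|Y(\F_q)| \le q \cdot \delta q^{d-1} = \delta q^d$, completing the induction. One then recombines the components to get the displayed bound for general $\X$.

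The main obstacle is the bookkeeping on the degrees of the hyperplane slices: I need a clean statement that, for an irreducible $Y$ of dimension $d$ and degree $\delta$ and a hyperplane $H \not\supseteq Y$, the scheme $Y \cap H$ has dimension sequence bounded by $d-1$ and degree sequence summing to at most $\delta$ (counting all components, not just the top-dimensional ones — otherwise lower-dimensional embedded or isolated components of the intersection are not obviously controlled). This is where I expect to lean on the geometric characterisation of degree given earlier (degree as the max number of points in a zero-dimensional linear section over $\overline{\F}_q$) together with the additivity of degree in a hyperplane section for the top-dimensional part and a separate, cruder dimension argument to absorb the lower-dimensional junk into the $q^{d-1}$ slack. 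An alternative that sidesteps some of this is to work affinely throughout and use the projective closure only to define degree, sweeping by hyperplanes through a fixed point at infinity so that the affine slices behave well; but the degree accounting is the crux either way.
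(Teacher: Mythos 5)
Your opening reduction --- bound each irreducible component by $\delta_{\X_i}q^{d_{\X_i}}$ and sum --- is the paper's entire proof: the irreducible (indeed equidimensional) case is simply quoted from \cite{LR}, so Theorem~\ref{thm:affine} is a two-line corollary there. What you go on to do is essentially reprove that cited result by slicing with the pencil of parallel hyperplanes $\{\ell = a\}$, which is a legitimate and more self-contained route (and is close in spirit to the argument in \cite{LR} itself). The one point you flag as the ``crux'' --- controlling the dimension and degree sequences of the slices $Y\cap H_a$, and in particular the fear that isolated lower-dimensional components of large degree might appear --- is resolved by a fact the paper records later as Proposition~\ref{prop:hyperplane_section}: if $Y$ is irreducible of dimension $d$ and $H\not\supseteq Y$, then every irreducible component of $Y\cap H$ has dimension $\geq d-1$ by Krull's Hauptidealsatz and $<d$ by irreducibility, so the intersection is equidimensional of dimension exactly $d-1$ (or empty, as can happen affinely); there is no lower-dimensional junk to absorb. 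The degree accounting then follows from the Hilbert-polynomial computation in that same proposition: the scheme $\overline{Y}\cap\overline{H}$ has degree exactly $\delta$, and the sum of the degrees of the reduced components of any equidimensional scheme is at most its scheme degree, so each slice has degree sequence summing to at most $\delta$ in dimension $d-1$, the inductive hypothesis gives at most $\delta q^{d-1}$ points per slice, and the $q$ slices give $\delta q^{d}$. (One also needs that components of the affine intersection have closures that are components of the projective intersection, which holds since no such component lies in the hyperplane at infinity.) So your outline is sound and genuinely different from the paper's proof-by-citation; to make it complete you should state and prove the equidimensionality-plus-degree lemma rather than leaving it as ``Bézout-type bounds'', since without it the bound $\sum_i\delta_i\leq\delta$ over \emph{all} components of the slice is exactly what is missing.
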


\begin{proof}
  The equidimensional case is proved in \cite{LR}.
  It holds in particular for irreducible varieties.
  Thus, for all $i\in \{1, \ldots ,r\}$,
  we have : 
  $$
  |\X_i(\F_q)| \leq \delta_{\X_i} q^{d_{\X_i}}.
  $$
  The result is obtained by summing up all these inequalities.
  % since we obviously
  % have
  % $|\X(\F_q)| \leq \sum_{i=1}^r |\X_i(\F_q)|$.
\end{proof}

\section{The projective case}

%\subsection{Our result}

The main result of this article is stated below. Section~\ref{sec:proof_main}
is devoted to its proof.

\begin{thm}\label{thm:main}
  Let $\X \subseteq \P^n$ be a projective variety
  with dimension sequence $d_{\X_1}, d_{\X_2}, \ldots , d_{\X_r}$ with
  $d_{\X_i}<n$ for all $i\in \{1, \ldots , r\}$, and degree sequence
  $\delta_{\X_1}, \ldots , \delta_{\X_r}$.
% Assume moreover that for all $i\in \{1, \ldots , r\}$, we have
% $\delta_{\X_i} \leq q^{n-d_{\X_i}}$
Then,
\begin{equation}
  \label{eq:main_thm}
|\X(\F_q)| \leq \left(\sum_{i=1}^r \delta_{\X_i}(\pi_{d_{\X_i}} - \pi_{2d_{\X_i} - n}) \right) + \pi_{2D_{\X} -n},
\end{equation}
where $D_{\X} = \max \{d_{\X_1}, \ldots , d_{\X_r}\}$.
\end{thm}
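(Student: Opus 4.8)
The plan is to induct on the ambient dimension $n$, using a double-counting argument over hyperplanes of $\P^n$ in the spirit of Serre's proof for hypersurfaces. Fix a projective variety $\X$ with all components of dimension $<n$. If $D_\X \le n-1$ in a genuinely strict way the inductive hypothesis will apply to hyperplane sections; the base cases $D_\X=0$ (finitely many points, where the bound reads $|\X(\F_q)|\le\sum\delta_{\X_i}+\pi_{-n}=\sum\delta_{\X_i}$, which is clear since each component of dimension $0$ is a single rational point, counted with its degree) and small $n$ are handled directly. For the inductive step, one counts pairs $(P,\mathcal{H})$ with $P\in\X(\F_q)\cap\mathcal{H}$ and $\mathcal{H}\in\check\P^n(\F_q)$. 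Summing over $P$ gives $|\X(\F_q)|\cdot\pi_{n-1}$, since the hyperplanes through a fixed rational point form a $\P^{n-1}$. Summing over $\mathcal{H}$ requires, for each hyperplane, a bound on $|(\X\cap\mathcal{H})(\F_q)|$ coming from the inductive hypothesis applied inside $\mathcal{H}\cong\P^{n-1}$.

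The key difficulty, flagged already in the introduction, is that $\X\cap\mathcal{H}$ need not have the ``expected'' dimension and degree sequence: a component $\X_i\not\subseteq\mathcal{H}$ contributes a section of dimension $d_{\X_i}-1$ and degree at most $\delta_{\X_i}$ (by the geometric description of degree via linear sections, Section~\ref{sec:nota}), but a component with $\X_i\subseteq\mathcal{H}$ contributes itself, unchanged. So I would split the argument into two regimes. \emph{First}, the case where \emph{no} irreducible component of $\X$ lies in any hyperplane: then for every $\mathcal{H}$ the section $\X\cap\mathcal{H}$ has dimension sequence bounded by $(d_{\X_i}-1)_i$ and degree sequence bounded by $(\delta_{\X_i})_i$, so the inductive bound in $\P^{n-1}$ gives a clean estimate; plugging into the double count and using the recursions \eqref{eq:elem}--\eqref{eq:bidon} (note $\pi_{d_{\X_i}}-\pi_{2d_{\X_i}-n}=q(\pi_{d_{\X_i}-1}-\pi_{2(d_{\X_i}-1)-(n-1)})$, exactly matching the shift $n\mapsto n-1$, $d\mapsto d-1$) should yield \eqref{eq:main_thm} after dividing by $\pi_{n-1}$. \emph{Second}, the general case: write $\X=\X'\cup\X''$ where $\X''$ is the union of components contained in some hyperplane and $\X'$ the union of the rest. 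One handles $\X'$ by the first case (or by a refined count restricted to hyperplanes avoiding the relevant components), and absorbs the contribution of $\X''(\F_q)$ separately, bounding it via a lower-dimensional instance of the theorem or directly; the term $\pi_{2D_\X-n}$ on the right of \eqref{eq:main_thm} is the slack that accommodates this.

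I expect the main obstacle to be the bookkeeping in this second regime: controlling the overcounting of points lying simultaneously on $\X'$ and $\X''$, and making sure that when a component is contained in a hyperplane the corresponding loss is paid for by the additive $\pi_{2D_\X-n}$ term rather than leaking into the main sum. A secondary technical point is that the inductive hypothesis inside $\mathcal{H}$ must be applied even when $\X\cap\mathcal{H}$ is not reduced or has components of dimension $n-1$ inside $\mathcal{H}$ (which forces a hyperplane-of-$\P^{n-1}$ to be a component, i.e.\ $\mathcal{H}\subseteq\X$ cannot happen since $d_{\X_i}<n$, but a component could have dimension $n-1=\dim\mathcal{H}$ only if $\X_i=\mathcal{H}$, again excluded); one must check the scheme-theoretic section's degree sequence is still dominated as claimed, which is where the geometric characterisation of degree and the behaviour of Hilbert polynomials under hyperplane section are used. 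Once the two cases are in place, the theorem follows by taking $\mathcal{H}$ to range over all of $\check\P^n(\F_q)$ and choosing, in the bound for each section, the dimension/degree data optimally.
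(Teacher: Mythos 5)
Your overall architecture --- induction on $n$, a double count over hyperplanes, and a split according to whether irreducible components lie in hyperplanes --- is the same as the paper's, and your first regime is essentially sound modulo one omission: the inequality $|\X(\F_q)|\leq\frac{\pi_n}{\pi_{n-1}}B$ obtained by ``dividing by $\pi_{n-1}$'' only yields \eqref{eq:main_thm} when the inductive bound $B$ for the hyperplane sections satisfies $B<\pi_{n-1}$ (one then writes $\pi_n/\pi_{n-1}=q+1/\pi_{n-1}$ and uses integrality); when $B\geq\pi_{n-1}$ the double count gives nothing and one must fall back on the trivial bound $|\X(\F_q)|\leq\pi_n$, checking that the right-hand side of \eqref{eq:main_thm} already exceeds $\pi_n$. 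The paper makes exactly this case distinction (and restricts to hyperplanes through a fixed rational point of $\X$, a variant of the same ratio argument).

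The genuine gap is in your second regime. You propose to bound $\X''$, the union of components contained in hyperplanes, ``separately, via a lower-dimensional instance of the theorem or directly'', with $\pi_{2D_\X-n}$ as the slack. This cannot work for \emph{linear} components, and every linear component of dimension $<n$ lies in some hyperplane, so they all land in $\X''$. A union of $r\geq 2$ linear $d$-spaces pairwise meeting in a fixed $(2d-n)$-dimensional linear space (a flower) attains \eqref{eq:main_thm} with equality, while the naive count $\sum_i \pi_d$ overshoots the bound by $(r-1)\pi_{2d-n}$; the inductive bound inside a containing hyperplane returns $\pi_d$ as well, so there is no slack anywhere to absorb this. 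The paper needs three separate devices here that your sketch does not contain: (i) for a \emph{non-linear} component $\Zc$ inside a hyperplane, the induction in $\P^{n-1}$ gives $\delta(\pi_d-\pi_{2d-n+1})+\pi_{2d-n+1}$, and only the hypothesis $\delta\geq 2$ (Lemma~\ref{lem:trick}) allows this to be absorbed into the single term $\delta(\pi_d-\pi_{2d-n})$ with no additive loss --- the statement is false for $\delta=1$; (ii) for a single linear non-hyperplane component $\L$ together with the remaining components, a \emph{modified} incidence graph over hyperplanes through a point of $\L$ that do not contain $\L$; (iii) for several linear components, the estimate $\dim\L_1\cap\L_i\geq d_{\L_1}+d_{\L_i}-n\geq 2d_{\L_i}-n$, which shows the over-counted points already sit inside $\L_1$. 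Finally, your parenthetical claim that $\X_i=\mathcal{H}$ is ``excluded'' is wrong: the theorem allows components of dimension $n-1$, and a hyperplane component breaks the induction inside $\mathcal{H}$ (its dimension is not $<n-1$); the paper treats this case by splitting $\X$ into $\H$ and the affine variety $\X\setminus\H$ and invoking the affine bound (Theorem~\ref{thm:affine}), another ingredient missing from your plan.
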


\begin{rem}
  \label{rem:var_to_scheme}
  Theorem~\ref{thm:main} holds actually when replacing $\X$ by
  a closed subscheme $\mathcal{S}$ of $\P^n$.
  Indeed, the result applies to the variety $\mathcal{S}_\red$. Then,
  notice that
  \begin{enumerate}[(i)]
  \item $\mathcal{S}(\F_q) = \mathcal{S}_{red}(\F_q)$;
  \item for all scheme $\mathcal{S}_i$ of the irredundant decomposition
    of $\mathcal{S}$, we have $\deg \mathcal{S}_i \geq \deg
    {(\mathcal{S}_i)}_{\red}$ and $\dim \mathcal{S}_i =
    \dim {(\mathcal{S}_i)}_{\red}$.
  \end{enumerate}
  Therefore, the right hand side of~(\ref{eq:main_thm})
  is smaller when applied to the irredundant decomposition of
  $\mathcal{S}_\red$ than when applied to that of $\mathcal{S}$.
\end{rem}

Ghorpade and Lachaud's conjecture \cite[Conjecture 12.2]{Ghorpade_Lachaud}
is a straightforward
corollary of Theorem \ref{thm:main} since it is noting but the equidimensional
case.

\begin{cor}\label{cor:Lachaud}
  Let $\X\subseteq \P^n$ be an equidimensional projective variety of
  dimension $d<n$ and degree $\delta$. Then,
  $$
  |\X(\F_q)| \leq \delta(\pi_d - \pi_{2d-n})+ \pi_{2d-n}.
  $$
\end{cor}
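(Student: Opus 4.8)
The plan is to deduce Corollary~\ref{cor:Lachaud} directly from Theorem~\ref{thm:main} by specialising to the equidimensional situation. Let $\X \subseteq \P^n$ be equidimensional of dimension $d<n$ and degree $\delta$, with irredundant decomposition $\X = \X_1 \cup \cdots \cup \X_r$. By equidimensionality, $d_{\X_i} = d$ for every $i$, so $D_{\X} = d$ as well. Writing $\delta = \sum_{i=1}^r \delta_{\X_i}$ (this is the additivity of degree for equidimensional varieties, each component $\X_i$ having dimension $d$), Theorem~\ref{thm:main} gives
\begin{equation*}
|\X(\F_q)| \leq \left( \sum_{i=1}^r \delta_{\X_i}(\pi_d - \pi_{2d-n}) \right) + \pi_{2d-n} = \delta(\pi_d - \pi_{2d-n}) + \pi_{2d-n},
\end{equation*}
which is exactly the claimed bound.

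The only point that requires a word of justification is why the relevant notion of degree used in the statement of the corollary coincides with the sum of the degrees of the irreducible components appearing in Theorem~\ref{thm:main}. This is precisely the Hilbert-polynomial definition of degree recalled in Section~\ref{sec:nota}: for an equidimensional scheme of dimension $d$, the leading coefficient of the Hilbert polynomial is additive over the components of dimension $d$ (there being no components of larger dimension to interfere), so $\delta_{\X} = \sum_i \delta_{\X_i}$. Alternatively, one can invoke the geometric description of the degree as the maximal number of points in a zero-dimensional intersection with a linear subvariety of codimension $d$, which again splits as a sum over the $\X_i$.

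There is essentially no obstacle here: the work has all been done in Theorem~\ref{thm:main}, and the corollary is the transparent equidimensional specialisation, observing that the second term $\pi_{2D_{\X}-n}$ in~(\ref{eq:main_thm}) becomes $\pi_{2d-n}$ and the sum collapses once the $\delta_{\X_i}$ are factored out against the common value $\pi_d - \pi_{2d-n}$. The statement recovers Serre's bound when $d = n-1$, since then $\pi_{2d-n} = \pi_{n-2} = (q^{n-1}-1)/(q-1)$ and $\pi_d = \pi_{n-1}$, giving $\delta q^{n-1} + (q^{n-1}-1)/(q-1)$ after using~(\ref{eq:bidon}); this sanity check confirms the normalisation but is not needed for the proof.
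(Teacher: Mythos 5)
Your proof is correct and follows exactly the route the paper takes: Corollary~\ref{cor:Lachaud} is the equidimensional specialisation of Theorem~\ref{thm:main}, with $D_{\X}=d$ and the degrees of the components summing to $\delta$. Your explicit justification of the additivity $\delta = \sum_i \delta_{\X_i}$ via the Hilbert polynomial is a welcome detail that the paper leaves implicit.
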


\begin{rem}
  Actually Ghorpade and Lachaud stated this conjecture under the additional
  hypothesis ``$\X$ is a complete intersection''. The conjecture
  turns out to be true even without this hypothesis.
\end{rem}

\section{The proof}\label{sec:proof_main}

We will prove Theorem~\ref{thm:main} by induction on the dimension $n$ of the
ambient space. 
Firstly, let us introduce a notation.

\begin{nota}\label{nota:B}
  Let $\mathcal{S} \subseteq \P^n$ be a closed subscheme of $\P^n$
  with dimension and degree sequences $d_1, \ldots, d_s$ and
  $\delta_1, \ldots, \delta_s$. We define $B_n(\mathcal{S})$ as: 
  $$
  B_n (\mathcal{S})
  \eqdef \left( \sum_{i=1}^s \delta_i \left( \pi_{d_i} - \pi_{2d_i -n}
    \right)\right) + \pi_{2D_\mathcal{S} -n}.
  $$
\end{nota}

\subsection{A consequence of B\'ezout theorem}

The following classical statement is central in the proofs to follow.
It is nothing but a corollary of a refined version of Bézout Theorem (see for instance \cite{Fulton_intersection}). We outline an ad hoc proof for the
comfort of the reader.

\begin{prop}
  \label{prop:hyperplane_section}
  Let $\X \subseteq \P^n$ be an irreducible projective variety of dimension
  $d\geq 1$ and degree $\delta$ and $\H$ be a hyperplane of $\P^n$ which does
  not contain $\X$. Then, $\X \cap \H$
  is an equidimensional scheme of dimension $d-1$ and degree $\delta$.
\end{prop}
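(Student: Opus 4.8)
The plan is to establish the three assertions---$\X \cap \H$ is nonempty of dimension $d-1$, equidimensional, and of degree $\delta$---in that order, reducing the statement to standard facts about Hilbert polynomials and dimension theory. First I would observe that since $\X$ is irreducible of dimension $d \geq 1$ and $\H \not\supseteq \X$, the linear form $\ell$ cutting out $\H$ is not a zero divisor in the homogeneous coordinate ring $R = \F_q[x_0,\dots,x_n]/I(\X)$ (as $R$ is a domain and $\ell \neq 0$ in $R$), so multiplication by $\ell$ gives a short exact sequence $0 \to R(-1) \xrightarrow{\ \ell\ } R \to R/\ell R \to 0$ of graded modules. Taking Hilbert polynomials, the Hilbert polynomial of $R/\ell R$---which is the homogeneous coordinate ring of the scheme $\X \cap \H$---equals $P_R(t) - P_R(t-1)$. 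Writing $P_R(t) = \frac{\delta}{d!}t^d + (\text{lower order})$, the difference is $\frac{\delta}{(d-1)!}t^{d-1} + (\text{lower order})$, which has degree exactly $d-1$ with leading coefficient $\frac{\delta}{(d-1)!}$. This simultaneously shows $\X \cap \H$ is nonempty, has dimension $d-1$, and has degree $\delta$ in the sense of the degree definition given in the excerpt, provided we know $\X \cap \H$ is equidimensional.

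The remaining point, and the one I expect to be the main obstacle, is equidimensionality: a priori $\X \cap \H$ could have an embedded or a lower-dimensional isolated component, and the Hilbert polynomial computation alone only controls the top-dimensional part. The cleanest route is Krull's principal ideal theorem (Hauptidealsatz): since $\ell$ is a nonzero non-unit, every minimal prime over $(\ell)$ in $R$ has height $1$, hence every irreducible component of $\X \cap \H$ has dimension exactly $d - 1$; this rules out lower-dimensional isolated components. One still must argue there are no embedded components affecting the degree count, but since $R$ is a domain and $\ell$ is a non-zero-divisor, $R/\ell R$ has no embedded primes coming from the single hypersurface section in a Cohen--Macaulay-type argument---alternatively, one simply notes that the degree as defined via the leading Hilbert coefficient is insensitive to embedded components, so the degree statement holds regardless. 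Thus I would phrase the equidimensionality claim purely via Krull's theorem and let the Hilbert polynomial computation handle the dimension and degree.

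To summarize the steps: (1) use irreducibility of $\X$ to see $\ell$ is a non-zero-divisor on $R$; (2) write down the exact sequence $0 \to R(-1) \to R \to R/\ell R \to 0$ and extract $P_{R/\ell R}(t) = P_R(t) - P_R(t-1)$; (3) read off that this has degree $d-1$ and leading coefficient $\delta/(d-1)!$, giving $\dim(\X \cap \H) = d-1$ and $\deg(\X \cap \H) = \delta$; (4) invoke Krull's principal ideal theorem to conclude every component of $\X \cap \H$ has dimension $d-1$, i.e.\ the scheme is equidimensional. Steps (1)--(3) are routine graded-module bookkeeping; step (4) is the conceptual heart, and citing the refined B\'ezout theorem of \cite{Fulton_intersection} gives an alternative packaging of the whole argument if one prefers to avoid spelling out the commutative algebra.
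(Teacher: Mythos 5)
Your proposal is correct and follows essentially the same route as the paper: the same short exact sequence $0 \to R(-1) \xrightarrow{\ \ell\ } R \to R/\ell R \to 0$ and Hilbert polynomial subtraction for the dimension and degree, and Krull's Hauptidealsatz for equidimensionality (the paper cites Hartshorne's Theorem~7.2, whose proof is exactly that, combined with the observation that no component can have dimension $d$ since $\X$ is irreducible and not contained in $\H$). Your added remarks---that the height-one conclusion translates to dimension $d-1$ because graded domains over a field are catenary, and that embedded primes are irrelevant both to the leading Hilbert coefficient and to equidimensionality as defined via the reduced scheme---are accurate and fill in details the paper leaves implicit.
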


\begin{proof}
  Since $\X \nsubseteq \H$, an irreducible component of $(\X \cap \H)_{\red}$
  has dimension $<d$, while from \cite[Theorem 7.2]{H}
  it has dimension $\geq d-1$. Thus, $\X \cap \H$ is equidimensional
  of dimension $d-1$. For the degree, the exact sequence
  $$
  \xymatrix{0 \ar[r]& \fract{\F_q[x_0,\ldots, x_n]}{I} \ar[r]^{\times f}&
    \fract{\F_q[x_0,\ldots, x_n]}{I} \ar[r]&
    \fract{\F_q[x_0,\ldots, x_n]}{I+(f)} \ar[r] & 0
    }
  $$
  entails a relation on the Hilbert polynomials
  $P_\X$ and $P_{\X \cap \H}$ of $\X$ and $\X \cap \H$. Namely~:
  $$
  P_{\X \cap \H}(t) = P_{\X}(t) - P_{\X}(t-1). 
  $$
  Since the leading term of $P_\X$ is $\frac{\delta}{d!}$, after an easy
  computation, we see that the leading term of $P_{\X \cap \H}$
  equals $\frac{\delta}{(d-1)!}$, which concludes the proof.
\end{proof}

\begin{cor}
  \label{cor:central}
  Let $\X \subseteq \P^n$ be a projective variety
  with irredundant decomposition $\X_1 \cup \cdots \cup \X_s$.
  Let $\H \in \check{\P}^n(\F_q)$ be a hyperplane which does not contain
  any of the $\X_i$'s. Then,
  $$
  B_{n-1} (\X \cap \H) = \left(\sum_{i=1}^s \delta_{\X_i} \left(\pi_{d_{\X_i} - 1} -
      \pi_{2d_{\X_i} -n - 1} \right)\right) + \pi_{2D_{\X} - n - 1}.
  $$
  In particular, this quantity is independent from $\H$ and denoted
  $B_{n-1}(\X \cap \ \cdot\ )$.
\end{cor}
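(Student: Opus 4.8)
The plan is to apply Proposition~\ref{prop:hyperplane_section} to each irreducible component of $\X$ separately, assemble the resulting dimension and degree data of $\X\cap\H$, and then simplify the defining expression of $B_{n-1}$ by elementary manipulation of the $\pi_j$'s.

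First I would record the componentwise hyperplane sections. Each $\X_i$ is an integral projective variety which, by hypothesis, is not contained in $\H$. If $d_{\X_i}=0$ then $\X_i$ is a single $\F_q$-rational point off $\H$, so $\X_i\cap\H=\emptyset$, and the matching summand $\delta_{\X_i}(\pi_{-1}-\pi_{-n-1})$ of the right-hand side vanishes too, hence such indices may be discarded. For $d_{\X_i}\geq 1$, Proposition~\ref{prop:hyperplane_section} tells us that $\X_i\cap\H$ is equidimensional of dimension $d_{\X_i}-1$ and degree $\delta_{\X_i}$. Consequently, in $\X\cap\H=\bigcup_{i=1}^{s}(\X_i\cap\H)$, the irreducible components arising from the $i$-th piece all have dimension $d_{\X_i}-1$ and, by additivity of the degree over the components of an equidimensional scheme, their degrees sum to $\delta_{\X_i}$. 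So the dimension and degree sequences of $\X\cap\H$ split into $s$ blocks, the $i$-th of which consists of entries all equal to $d_{\X_i}-1$ carrying degrees of total $\delta_{\X_i}$; moreover $D_{\X\cap\H}=\max_i(d_{\X_i}-1)=D_{\X}-1$, since a component of $\X\cap\H$ of maximal dimension can only come from a top-dimensional component of $\X$.

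It then remains to substitute these data into Notation~\ref{nota:B} with ambient dimension $n-1$. Summing the definition of $B_{n-1}(\X\cap\H)$ block by block, the $i$-th block contributes $\delta_{\X_i}\bigl(\pi_{d_{\X_i}-1}-\pi_{2(d_{\X_i}-1)-(n-1)}\bigr)=\delta_{\X_i}\bigl(\pi_{d_{\X_i}-1}-\pi_{2d_{\X_i}-n-1}\bigr)$, using $2(d_{\X_i}-1)-(n-1)=2d_{\X_i}-n-1$, while the leftover term is $\pi_{2D_{\X\cap\H}-(n-1)}=\pi_{2D_{\X}-n-1}$. Adding over $i$ yields precisely the stated expression, which no longer involves $\H$; this simultaneously proves the equality and the independence from $\H$, and legitimises the notation $B_{n-1}(\X\cap\ \cdot\ )$.

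The only step deserving care is the bookkeeping in the second paragraph. A priori a component of some $\X_i\cap\H$ could be contained in a strictly higher-dimensional component of some $\X_{i'}\cap\H$ (this forces it into $\X_i\cap\X_{i'}$, which can happen when $d_{\X_{i'}}>d_{\X_i}$), so that the genuine irredundant decomposition of $\X\cap\H$ carries fewer entries than the block decomposition above. This is harmless: one reads the identity off the decomposition $\X\cap\H=\bigcup_i(\X_i\cap\H)$ inherited from that of $\X$, which is the decomposition relevant to the induction of Section~\ref{sec:proof_main}; and deleting an absorbed entry only discards a non-negative term $\delta(\pi_{d}-\pi_{2d-(n-1)})$ (non-negative because $d=d_{\X_i}-1<n-1$), the total degree $\delta_{\X_i}$ in dimension $d_{\X_i}-1$ being in any case guaranteed by Proposition~\ref{prop:hyperplane_section} applied to $\X_i$ itself. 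Everything else is the elementary arithmetic of the $\pi_j$'s.
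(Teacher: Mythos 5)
Your argument is essentially identical to the paper's: apply Proposition~\ref{prop:hyperplane_section} to each $\X_i$, note that the degrees of the components of $\X_i\cap\H$ sum to $\delta_{\X_i}$ while their common dimension is $d_{\X_i}-1$ (so $D_{\X\cap\H}=D_{\X}-1$), and substitute into Notation~\ref{nota:B} using $2(d_{\X_i}-1)-(n-1)=2d_{\X_i}-n-1$. Your closing remark about a component of $\X_i\cap\H$ being absorbed into a higher-dimensional component of $\X_{i'}\cap\H$ is a genuine subtlety that the paper's proof passes over in silence; as you observe, absorption can only push $B_{n-1}(\X\cap\H)$ below the displayed expression, so the formula remains a valid upper bound --- which is all the induction in Section~\ref{sec:proof_main} actually uses --- even though the literal equality can fail for non-equidimensional $\X$.
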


\begin{proof}
  For all $i \in \{1, \ldots , s\}$, define
  $\Y_{i,1}, \ldots, \Y_{i,s_i}$ the components of the irredundant decomposition
  of $\X  \cap \H$. We have
  \begin{align*}
    B_{n-1}(\X \cap \H) & = \left(\sum_{i=1}^s \sum_{j = 1}^{s_i} \delta_{\Y_{i,j}}
    \left(\pi_{d_{\Y_{i,j}}} - \pi_{2d_{\Y_{i,j}}-(n-1)} \right)\right)
    +\pi_{2D_{\X \cap \H}-(n-1)}. 
  \end{align*}
From Proposition~\ref{prop:hyperplane_section},
  for all $i, j$, we have $d_{\Y_{i,j}} = d_{\X_i -1}$. Hence,
  $D_{\X \cap \H} = D_{\X} - 1$ and
  \begin{align*}
     B_{n-1}(\X \cap \H)
                  & = \left(\sum_{i=1}^s \sum_{j = 1}^{s_i} \delta_{\Y_{i,j}}
    \left(\pi_{d_{\X_i}-1} - \pi_{2(d_{\X_i}-1)-(n-1)}\right)  \right)
    + \pi_{2(D_{\X}-1)-(n-1)} \\
                  & = \left(\sum_{i=1}^{s} \left(\pi_{d_{\X_i}-1} -\pi_{2d_{\X_i}-n-1}
                    \right) \sum_{j=1}^{s_i} \delta_{\Y_{i,j}}\right)
                  + \pi_{2D_{\X} - n -1} 
  \end{align*}
  For all $i$, we have $\sum_{j=1}^{s_i} \delta_{\Y_{i,j}} = \delta_{\X_i \cap \H}$
  which equals $\delta_{\X_i}$ from Proposition~\ref{prop:hyperplane_section}.
  This concludes the proof.
\end{proof}

\subsection{A remark on the zero dimensional part}\label{ss:zerodim}
If $\X = \Y \cup \Zc$, where $\Y$ is a union of irreducible
components of dimension larger than or equal to $1$ and $\Zc$
has dimension
$0$, then it is sufficient to prove the upper bound on $\Y$. Indeed,
denote by $\delta_{\Zc}$ the degree of $\Zc$ and
by $\Y_1, \ldots, \Y_r$ the irreducible components if $\Y$.
Then, notice that $D_{\X} = D_{\Y}$. The upper bound we wish to prove becomes
$$
|\X (\F_q)| \leq \sum_{i=1}^r \delta_{\Y_i}(\pi_{d_{\Y_i}} - \pi_{2d_{\Y_i}-n}) + \delta_{\Zc} + \pi_{2D_{\Y} -n}. 
$$ 
Thus, if we can prove the result for $\Y$, then since 
$$
|\Zc(\F_q)|\leq \delta_{\Zc} \qquad \textrm{and}\qquad
|\X(\F_q)| \leq |\Y (\F_q)| +|\Zc(\F_q)|,
$$
we get the result.
Therefore, from now on, we assume that $\X$ has no zero-dimensional
component. That is to say, the dimension sequence satisfies
$$
\forall i \in \{1, \ldots, r\},\quad d_{\X_i}> 0.
$$

\subsection{Initialisation}
The case $n=1$, which corresponds to that of a zero-dimensional subscheme
of the projective line is obvious.
% Indeed,  
% $\X$ is a zero-dimensional subvariety of the projective line. Hence, the 
% degree sequence has a single element $\delta$ which is the degree of $\X$
% as a zero dimensional scheme. This degree is clearly larger than or equal to 
% the number of rational points of $\X$ which proves the result for $n=1$.
Actually, the case $n=2$ is already known.
Indeed, as suggested in \S \ref{ss:zerodim}, one can assume
that $\X$ has no zero-dimensional component and hence is a curve of degree 
$\delta$. Then, the upper bound
$$
|\X (\F_q)| \leq \delta q+1
$$
is a direct consequence of \cite{lettre}.

\begin{rem}
  Notice that some refined bounds on the number of points of plane curves
  are given in \cite{HommaKim, StohrVoloch, Sziklai}.
\end{rem}

\subsection{The induction step under different assumptions}
From now on, we assume that $n\geq 3$.
Consider the two following assumptions.

\begin{ass}\label{ass:step1}
No irreducible component of $\X$ is contained in a hyperplane.
\end{ass}

\begin{ass}\label{ass:step3}
Every irreducible component of $\X$ is either linear or is not contained in 
any hyperplane.
\end{ass}

The induction step is proved under Assumption~\ref{ass:step1} in \S
\ref{ss:ass1}, then under Assumption \ref{ass:step3} in \S~\ref{ss:ass3}.
Finally, the general case is treated in \S \ref{ss:general}.

% \ref{ss:ass3}\begin{ass}\label{ass:step2}
% At least one irreducible component of $\X$ is contained in a hyperplane
% of $\check{\P}^n (\F_q)$ and
% no irreducible component of $\X$ is a linear subvariety of $\P^n$
% defined over $\F_q$.  
% \end{ass}

% We prove the result under this assumption, the remaining case is treated in the
% next sections.

\subsection{Proof under Assumption \ref{ass:step1}}\label{ss:ass1}
% First of all, notice that if 
% $$
% \left(\sum_{i=1}^r \delta_{\X_i}(\pi_{d_{\X_i}} - \pi_{2d_{\X_i} - n}) \right) + \pi_{2D_{\X} -n}
% \geq \pi_{n},
% $$
% then the result is obviously true since $\X \subset \P^n$ and hence
% $|\X(\F_q)|\leq |\P^n(\F_q)| = \pi_n$.
% Thus, from now on, one can assume that
% \begin{equation}
%   \label{eq:assbound}
%   \left(\sum_{i=1}^r \delta_{\X_i}(\pi_{d_{\X_i}} - \pi_{2d_{\X_i} - n}) \right) + \pi_{2D_{\X} -n}
% < \pi_{n}.
% \end{equation}
% Without loss of generality, one can assume that
% for all $i \in \{1, \ldots , r\}$, there exists $P\in \X_i (\F_q)$
% such that $P\notin \cup_{i\neq j} \X_j(\F_q)$.   
% Indeed, if this assumption does not hold, one reduces to the study of the
% variety $\cup_{j\neq i} \X_j$ which will provide a sharper bound.
%Now, assume that $n\geq 2$.
First, notice that the upper bound in Theorem~\ref{thm:main}
is obviously true if $\X (\F_q) = \emptyset$.
Therefore, assume from now on that $\X (\F_q)$ is nonempty and
let $P\in \X(\F_q)$. By Assumption~\ref{ass:step1},
no $\H \in \check{\P}^n(\F_q)$ containing $P$ contains an
irreducible component of $\X$.
Next, let us introduce the  bipartite graph $\G$ whose first and second
vertex sets are
\begin{align*}
\V_1 & \eqdef \X(\F_q)\setminus \{P\},\\
\V_2 & \eqdef \{\H \in \check{\P}^n(\F_q) ~|~  P \in \H\}
\end{align*}
and whose edge set is
$$
\E \eqdef \left\{(Q, \H) \in \V_1 \times \V_2 ~|~ Q\in \H \right\}.
$$
The heart of the proof consists in counting the set of edges by two distinct manners. 

In what follows, the number of edges containing a given vertex
is referred to as the {\em valency} of the vertex.

\begin{rem}
  Usually, in graph theory, the number of edges containing a given vertex
  is referred to a the {\em degree} of the vertex.
  We chose {\em valency} to avoid confusions with the notion of degree of a
  subscheme of $\A^n$ or $\P^n$.
\end{rem}

Let us summarise some properties of the graph:
\begin{enumerate}[(i)]
\item\label{item:V2} $|\V_2| = \pi_{n-1}$;
\item\label{item:degV1} the valency of a vertex $Q \in \V_1$
equals $\pi_{n-2}$
\item\label{item:degV2} the valency of a vertex
  $\H \in \V_2$ equals $|(\X \cap \H)(\F_q) \setminus \{P\}|$ which,
  by induction,
  is bounded above by $B_{n-1}(\X \cap \H ) -1$
  (see Notation~\ref{nota:B}) and hence by $B_{n-1}(\X \cap \ \cdot \ )-1$
  (see Corollary~\ref{cor:central}).
  From Remark~\ref{rem:var_to_scheme},
  this holds even if $\X \cap \H$ is non reduced.
\end{enumerate}

First, assume that $B_{n-1}(\X \cap \ \cdot \ ) \geq \pi_{n-1}$.
From Corollary~\ref{cor:central}, we have
$$
 \left( \sum_{i=1}^r \delta_{\X_i} (\pi_{d_{\X_i}-1} - \pi_{2d_{\X_i}-n-1})\right)
  + \pi_{2D_{\X}-n-1}  \geq  \pi_{n-1},
$$
then, multiplying both sides by $q$ yields (thanks to (\ref{eq:bidon}))
\begin{equation}
 % \left( \sum_{i=1}^r \delta_{\X_i} (\pi_{d_{\X_i}} - \pi_{2d_{\X_i}-n})\right)
 %  + q\pi_{2D_{\X}-n-1}  & \geq &  q \pi_{n-1} \\  
\label{eq:easy} \left( \sum_{i=1}^r \delta_{\X_i} (\pi_{d_{\X_i}} - \pi_{2d_{\X_i}-n})\right)
  + \pi_{2D_{\X}-n}  \geq   \pi_n .
\end{equation}
%That is, $B_n(\X) \geq \pi_n$.
Since $\X \subseteq \P^n$, we have $|\X (\F_q)| \leq \pi_n$.
Therefore, from (\ref{eq:easy}), the result is straightforward.

From now on, we assume that
\begin{equation}\label{eq:B}
B_{n-1}(\X \cap \ \cdot \ ) < \pi_{n-1}.
\end{equation}
From (\ref{item:degV1}), we have:
\begin{equation}
  \label{eq:degV1}
 |\E| = |\mathcal{V}_1| \pi_{n-1} = (|\X(\F_q)| -1)\pi_{n-2}.
\end{equation}
From (\ref{item:V2}) and (\ref{item:degV2}), we have
\begin{equation}
  \label{eq:degV2}
  |\E| \leq \pi_{n-1} % \left(  \left( \sum_{i=1}^r \delta_{\X_i} (\pi_{d_{\X_i}-1} - \pi_{2d_{\X_i}-n-1})\right)
     % + \pi_{2d_{\X_1}-n-1} -1\right)
   \left(B_{n-1}(\X \cap  \ \cdot \ )-1\right).
\end{equation}
This yields
\begin{equation}
  \label{eq:cardX}
  |\X (\F_q)| \leq 1+ \frac{\pi_{n-1}}{\pi_{n-2}} (B_{n-1}(\X \cap \ \cdot\ )-1)
 \leq 1+ \left(q+\frac{1}{\pi_{n-2}}\right) (B_{n-1}(\X \cap \ \cdot\ )-1).
\end{equation}
% From now on, assume that 
% \begin{equation}\label{eq:DH}
%  B = \left( \sum_{i=1}^r \delta_{\X_i} (\pi_{d_{\X_i}-1} - \pi_{2d_{\X_i}-n-1})\right)
%   + \pi_{2D_{\X}-n-1} -1 <  \pi_{n-1} -1.
% \end{equation}
% and hence,
% $$
% B =  \left( \sum_{i=1}^r \delta_{\X_i} (\pi_{d_{\X_i}-1} - \pi_{2d_{\X_i}-n-1})\right)
%   + \pi_{2d_{\X_1}-n-1} -1.
% $$
% In particular, using (\ref{eq:elem}) and~(\ref{eq:bidon}), we have
% \begin{equation}
%   \label{eq:qDH}
%   qB = \left( \sum_{i=1}^r \delta_{\X_i} (\pi_{d_{\X_i}} - \pi_{2d_{\X_i} -n})\right)
%   +\pi_{2D_{\X}-n} - 1 - q.
% \end{equation}
By the definition of $B_{n-1}(\X \cap \ \cdot\ )$ and,
thanks to (\ref{eq:bidon}), we get
$$
\begin{aligned}
  |\X (\F_q)|
     &\leq \left(\sum_{i=1}^r \delta_{\X_i}(\pi_{d_{\X_i}} - \pi_{2d_{\X_i} -n})\right) +
     \pi_{2D_{\X}-n}  - q + \frac{B_{n-1}(\X \cap \ \cdot \ )-1}{\pi_{n-2}}\cdot
\end{aligned}
$$
From (\ref{eq:B}), we have
$\frac{B_{n-1}(\X \cap \ \cdot\ )-1}{\pi_{n-2}} < \frac{\pi_{n-1} -1}{\pi_{n-2}}$
and, using (\ref{eq:elem}),
we obtain $\frac{B_{n-1}(\X \cap \ \cdot\ )-1}{\pi_{n-2}} < q$, which yields
$$
  |\X (\F_q)|
     \leq \left(\sum_{i=1}^r \delta_{\X_i}(\pi_{d_{\X_i}} - \pi_{2d_{\X_i} -n})\right) +
     \pi_{2D_{\X}-n}.
$$

\begin{rem}
  Instead of considering an incidence graph, the proof can be realised
using a purely algebraic geometric point of view by defining the incidence
variety
$$
\mathcal{T} \eqdef \left\{(Q, \H) \in \X \times \check{\P}^n ~|~ Q\in \H\right\}.
$$
Then, the approach consists in counting the number of rational points
of $\mathcal{T}$ by two different manners by estimating the
number of rational points of the fibres of the canonical projections
$\mathcal{T} \rightarrow \check{\P}^n$ and $\mathcal{T} \rightarrow \X$.
This point of view is developed in \cite[\S 12]{Ghorpade_Lachaud}
and \cite[\S 2]{LR}.
\end{rem}

\subsection{Proof under Assumption \ref{ass:step3}}
\label{ss:ass3}
The proof under Assumption~\ref{ass:step3} is done
in three steps. Under the assumption that no nonlinear irreducible component
of $\X$ is contained in a $\F_q$--rational hyperplane:
\begin{enumerate}
\item\label{it:case_hyperplane} we first treat the case when one of the
  irreducible components of $\X$ is a hyperplane;
\item then, we treat the case when only one irreducible component $\L$ of
  $\X$ is linear and $\L$ is not a hyperplane;
\item finally, we treat the case of multiple linear irreducible components
which are not hyperplanes.
\end{enumerate}

The treatment of the first step will require the following lemma.

\begin{lem}\label{lem:for_the_referee}
  Let $\mathcal{Y}$ be an irreducible closed
  sub-variety of $\P^n$   of dimension $d$ and degree $\delta$. Let
  $\mathcal{H} \in \check{\P}^n(\F_q)$ and
  $\mathcal{Y}_{\textrm{aff}}$ be the affine chart
  $\mathcal{Y} \setminus \mathcal{H}$ of $\mathcal{Y}$. Then,
  $\mathcal{Y}_{\textrm{aff}}$   has degree $d$ and dimension $\delta$. 
\end{lem}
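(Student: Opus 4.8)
The plan is to show that the \emph{projective closure} of $\mathcal{Y}_{\textrm{aff}}$ inside $\P^n$ is nothing but $\mathcal{Y}$ itself, and then to read the invariants off directly from the definitions recalled in Section~\ref{sec:nota}; equivalently, the point is to check that the dimension and the degree of $\mathcal{Y}_{\textrm{aff}}$ agree with those of $\mathcal{Y}$, i.e.\ are equal to $d$ and $\delta$ respectively.

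First I would dispose of the degenerate case: if $\mathcal{Y} \subseteq \mathcal{H}$ then $\mathcal{Y}_{\textrm{aff}} = \emptyset$, a situation that does not occur in the applications, where $\mathcal{Y}$ is a nonlinear component and $\mathcal{H}$ a hyperplane not containing it (cf.\ Assumption~\ref{ass:step3}); so we may and do assume $\mathcal{Y} \nsubseteq \mathcal{H}$. Then $\mathcal{Y} \cap \mathcal{H}$ is a proper closed subscheme of the integral scheme $\mathcal{Y}$, hence $\mathcal{Y}_{\textrm{aff}} = \mathcal{Y} \setminus \mathcal{H}$ is a nonempty open subscheme of $\mathcal{Y}$. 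In particular $\mathcal{Y}_{\textrm{aff}}$ is itself integral and dense in $\mathcal{Y}$; being a nonempty open subscheme of an irreducible scheme of finite type over $\F_q$, it has the same dimension as $\mathcal{Y}$, namely $d$.

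It then remains to treat the degree. By definition the degree of the affine variety $\mathcal{Y}_{\textrm{aff}} \subseteq \A^n = \P^n \setminus \mathcal{H}$ is the degree of its closure $\overline{\mathcal{Y}_{\textrm{aff}}}$ in $\P^n$. Now $\mathcal{Y}$ is closed in $\P^n$ and contains the dense subset $\mathcal{Y}_{\textrm{aff}}$, so $\overline{\mathcal{Y}_{\textrm{aff}}} = \mathcal{Y}$, both carrying the reduced induced structure; consequently $\deg \mathcal{Y}_{\textrm{aff}} = \deg \mathcal{Y} = \delta$, which is all that is needed.

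The argument is essentially a formality and I do not expect a genuine obstacle; the only point deserving a word of care is that the closure really recovers $\mathcal{Y}$ \emph{exactly}, with no spurious or embedded components, and this is immediate from the irreducibility of $\mathcal{Y}$. For a reader preferring an ideal-theoretic check: choosing homogeneous coordinates with $\mathcal{H} = \{x_0 = 0\}$, the homogeneous prime $I(\mathcal{Y}) \subseteq \F_q[x_0, \ldots, x_n]$ does not contain $x_0$, so $x_0$ is a non-zero-divisor modulo $I(\mathcal{Y})$ and the saturation $(I(\mathcal{Y}) : x_0^\infty)$ equals $I(\mathcal{Y})$; since this saturation is precisely the re-homogenized defining ideal of the projective closure of $\mathcal{Y}_{\textrm{aff}}$, that closure is $\mathcal{Y}$, and the two assertions follow.
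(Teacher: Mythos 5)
Your proposal is correct and follows essentially the same route as the paper's own proof: the dimension is preserved because $\mathcal{Y}_{\textrm{aff}}$ is a dense open subset of the irreducible variety $\mathcal{Y}$ (the paper phrases this via equality of function fields), and the degree is recovered because, by the definition adopted in Section~\ref{sec:nota}, the degree of the affine variety is that of its projective closure, which is $\mathcal{Y}$ itself. Your extra care about the degenerate case $\mathcal{Y}\subseteq\mathcal{H}$ and the ideal-theoretic saturation check are harmless additions but not a different argument.
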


\begin{proof}
  Since $\mathcal{Y}$ is irreducible, its dimension can be defined
  as the transcendence degree of its function field over $\F_q$.
  Since the function field of every open subset of $\mathcal{Y}$
  equals that of $\mathcal{Y}$, we deduce that $\mathcal{Y}$
  and $\mathcal{Y}_{\textrm{aff}}$ have the same dimension.

  The variety $\mathcal{Y}_{\textrm{aff}}$ is affine, its degree
  is that of its projective closure (see \S \ref{sec:nota}),
  which is nothing but $\mathcal{Y}$. 
\end{proof}

% Assume first that $\X$ is of the form $\X = \Y \cup \L$,
% where no irreducible component of $\Y$ is contained in 
% a rational hyperplane and $\L$ is a linear subvariety of 
% $\P^n$ defined over $\F_q$.

\subsubsection{If $\X$ has a hyperplane in its irredundant decomposition}
Let $\H \in \check{\P}^n(\F_q)$ be this hyperplane. Write the irredundant
decomposition of $\X$ as a union of irreducible varieties~:
$$\X = \X_1 \cup \X_2 \cup \cdots \cup \X_r,\qquad \textrm{with}\quad \X_1 = \H
$$
and such that for all $i \neq j$, $\X_i \nsubseteq \X_j$.
Notice that $D_{\X} = d_{\H} = n -1$.
%Second, without loss of generality, one can assume that none of the $\X_i$'s
%for $i\geq 2$ is contained in $\H$.
% $$
% d_{\X_1}, d_{\X_2}, \ldots , d_{\X_r} \qquad {\rm and} \qquad \delta_{\X_1}, \delta_{\X_2}, \ldots , \delta_{\X_r}, \qquad {\rm with} \qquad d_{\X_1} = n-1\qquad {\rm and} \delta_{\X_1} = 1.
% $$
Now, $\X$ is the disjoint union of $\H$ and
$\X_{\rm aff} \eqdef \X \setminus \H$.
The variety $\X_{\rm aff}$ is affine and, from Lemma~\ref{lem:for_the_referee},
its dimension and degree sequences are
$d_{\X_2}, \ldots , d_{\X_r}$ and $\delta_{\X_2}, \ldots , \delta_{\X_r}$.
Thanks to Theorem~\ref{thm:affine}, we get
\begin{equation}\label{eq:affine}
|\X (\F_q)| = |\H(\F_q)| +|\X_{\rm aff}(\F_q)|\leq
\pi_{n-1} + \sum_{i=2}^r \delta_{\X_i} q^{d_{\X_i}}.
\end{equation}
Next, notice that for all $i$, we have
\begin{equation}\label{eq:qk}
q^{d_{\X_i}} \leq \pi_{d_{\X_i}} - \pi_{2d_{\X_i}-n}.
\end{equation}
Indeed, since, $d_{\X_i} < n $, we have $2d_{\X_i} -n < d_{\X_i}$.
%and it is clear that for all $s, s'$ such that
%$s'<s<n$, we have $q^s\leq \pi_s - \pi_{s'}$.
On the other hand, since $\delta_{\H} = 1$, we get
\begin{equation}\label{eq:d1}
\pi_{n-1} = \pi_{d_{\H}} = \delta_{\H} (\pi_{d_{\H}} - \pi_{2d_{\H} - n}) + \pi_{2d_{\H} - n}.
\end{equation}
Putting (\ref{eq:affine}), (\ref{eq:qk}) and (\ref{eq:d1}) together,
we get
$$
|\X (\F_q)| \leq \left(\delta_{\H}(\pi_{d_{\H}} - \pi_{2d_{\H} - n}) + \sum_{i=2}^r \delta_{\X_i} (\pi_{d_{\X_i}} - \pi_{2d_{\X_i}-n})\right) +
\pi_{2d_{\H} -n}
$$
and, since $d_{\H} = D_{\X}$, this yields the result.

\subsubsection{When, $\X$ has a single linear subvariety
which is not a hyperplane in its irredundant decomposition}\label{ss:notahyp}
%but has dimension $\geq \frac{n}{2}$}
Let $\L$ be this linear subvariety of $\P^n$ which is
not a hyperplane. That is, its dimension satisfies $d_{\L} < n-1$.
Moreover, we assume that the other irreducible components of $\X$
are non linear. Recall that, by assumption, none of the other components
is contained in a hyperplane.
Here again, we write the irredundant decomposition of $\X$ as a union of 
irreducible components as
$$
\X  = \X_1 \cup \X_2 \cup \cdots \cup \X_r,\qquad \textrm{with}\quad \X_1 = \L.
$$

As for the proof under Assumption~\ref{ass:step1}, 
we will apply a combinatorial proof based on another incidence
structure. The incidence graph we consider is obtained as follows.
Choose $P\in \L (\F_q)$ and set
$$
\begin{aligned}
  \V_1 &\eqdef (\X\setminus \L) (\F_q);\\
  \V_2 &\eqdef \left\{\H \in \check{\P}^n(\F_q) ~|~  P\in \H\ \ \textrm{and}\ \
  \L \nsubseteq \H \right\};\\
  \E & \eqdef \{(Q, \H)\in \V_1 \times \V_2 ~|~ Q\in \H\}.
\end{aligned}
$$
Here, we have
$$
  |\V_2|  = \pi_{n-1} - \pi_{n-d_{\L}-1}.
$$
Indeed, it suffices to notice that the set of hyperplanes
in $\check{\P}^n(\F_q)$ containing
$\L$ has $\pi_{n-d_{\L}-1}$ elements.
Next, notice that in this incidence graph:
\begin{itemize}
\item the valency of a vertex of $\V_1$ is $\pi_{n-2}-\pi_{n-d_{\L}-2}$;
\item by induction and since no irreducible
component of $\X$ but $\L$ is contained in a hyperplane,
the valency of a vertex $\H$ of $\V_2$
equals $|(\X \cap \H)(\F_q)| - |(\L \cap \X)(\F_q)|$ which,
from Corollary~\ref{cor:central} is bounded above by
$$\sum_{i=1}^r \left( \delta_{\X_i} (\pi_{d_{\X_i}-1} - \pi_{2d_{\X_i} - n -1}) \right)
+ \pi_{2D_{\X} -n -1} - \pi_{d_{\L}-1},$$
where the ``$-\pi_{d_{\L}-1}$'' term corresponds to the
rational points of $\L \cap \H$, which are not counted.
\end{itemize}
Now, as in \S \ref{ss:ass1}, by counting the number of edges
of the graph in two different manners, we get
$$
\begin{aligned}
\left(|\X (\F_q)| - \pi_{d_{\L}} \right)(\pi_{n-2} & - \pi_{n-d_{\L}-2})
\leq\\
(\pi_{n-1} - &\pi_{n-d_{\L}-1})
\left[ \left( \sum_{i=1}^r \delta_{\X_i} (\pi_{d_{\X_i}-1} -
\pi_{2d_{\X_i} -n - 1}) \right) + \pi_{2D_{\X} - n -1} - \pi_{d_{\L}-1}\right].
\end{aligned}
$$
Using (\ref{eq:bidon}), we get
$$
\begin{aligned}
|\X (\F_q)| &  \leq  \pi_{d_{\L}} + q\left[ \left( \sum_{i=1}^r \delta_{\X_i} (\pi_{d_{\X_i}-1} -
\pi_{2d_{\X_i} -n - 1}) \right) + \pi_{2D_{\X} - n -1} - \pi_{d_{\L}-1}\right]\\
  & \leq \pi_{d_{\L}} + \left( \sum_{i=1}^r \delta_{\X_i} (\pi_{d_{\X_i}} -
\pi_{2d_{\X_i} -n }) \right) + \pi_{2D_{\X} - n } - \pi_{d_{\L}}\\
 & \leq \left( \sum_{i=1}^r \delta_{\X_i} (\pi_{d_{\X_i}} -
\pi_{2d_{\X_i} -n }) \right) + \pi_{2D_{\X} - n }.
\end{aligned}
$$

\subsubsection{When there are several linear components}

Now assume that $\X$ contains more than one linear irreducible 
component. Its irredundant decomposition is
$$
\X = \Y \cup \L_1 \cup \cdots \cup \L_s
$$
where $s>1$ and $\Y$ has no linear irreducible component.
Moreover, recall that we are still under Assumption~\ref{ass:step3},
that is to say that none of the irreducible
components $\Y_1, \ldots, \Y_r$ of $\Y$
is contained in a hyperplane of $\P^n$.
Assume that the linear components $\L_1, \ldots , \L_s$ are
sorted by decreasing dimensions $d_{\L_1} \geq d_{\L_2} \geq \cdots \geq
d_{\L_s}$.
% Moreover, we denote by $d_{\Y_1}\geq  \cdots \geq d_{\Y_m}$
% and $\delta_{\Y_1}, \ldots , \delta_{\Y_m}$ the dimension and degree
% sequence of $\Y$.
% Set $d_{\X_1} \eqdef \max\{d_{\Y_1}, d_{\L_1}\}$.
From \S \ref{ss:notahyp}, we have
$$
|(\Y \cup \L_1)(\F_q)|\leq \left( \sum_{i=1}^r \delta_{\Y_i} (\pi_{d_{\Y_i}} - \pi_{2d_{\Y_i}})\right) + (\pi_{d_{\L_1}} - \pi_{2d_{\L_1}-n})+ \pi_{2D_{\X} - n}.
$$
Now, notice that 
\begin{equation}\label{eq:grassman}
|\X (\F_q)| \leq |(\Y \cup \L_1)(\F_q)| +
\sum_{i=2}^s \left( |\L_i (\F_q)| - |(\L_1 \cap \L_i) (\F_q)| \right).
\end{equation}
Moreover, for all $i\in \{2, \ldots , s\}$
\begin{equation}
  \label{eq:L1capL2}
  \dim \L_1 \cap \L_i \geq d_{\L_1} + d_{\L_i} -n \geq 2d_{\L_i} -n
\end{equation}
Putting (\ref{eq:grassman}) and (\ref{eq:L1capL2}) together, we get
$$
| \X (\F_q)| \leq \left( \sum_{i=1}^r \delta_{\Y_i} (\pi_{d_{\Y_i}} - \pi_{2d_{\Y_i}})\right) + (\pi_{d_{\L_1}} - \pi_{2d_{\L_1}-n})+ \pi_{2D_{\X} - n} +
\sum_{i=2}^s (\pi_{d_{\L_i}} - \pi_{2d_{\L_i}-n}).
$$
And hence,
$$
|\X (\F_q)| \leq \left( \sum_{i=1}^r \delta_{\Y_i} (\pi_{d_{\Y_i}} - \pi_{2d_{\Y_i}})\right) + \left( \sum_{i=1}^s
(\pi_{d_{\L_i}} - \pi_{2d_{\L_i}-n}) \right)+ \pi_{2D_{\X} - n},
$$
which yields the expected upper bound.

\subsection{Proof in the general case}\label{ss:general}
Now, assume that 
$$\X = \Y \cup \Zc =  \Y_1 \cup \cdots \cup \Y_{s} \cup \Zc_1 \cup \cdots \cup \Zc_{\ell}$$
where the $\Y_i$'s are irreducible varieties which are either linear
or are not contained in any hyperplane in $\check{\P}^n(\F_q)$ and
the $\Zc_i$'s are non linear and each one is contained in at least one
hyperplane in $\check{\P}^n(\F_q)$.
In particular, since the $\Zc_i$'s are non linear,
for all $i\in \{1, \ldots, \ell\}$, we have $\delta_{\Zc_i} >1$.
% Recall that, by assumption, none of the $\Zc_i$'s are linear and hence
% $\delta_{\Zc_i}>1$ for all $i \in \{1, \ldots , \ell\}$.
% Finally, denote by 
% $$
% D \eqdef \max \left\{d_{\X_1}, d_{\Zc_1}, \ldots, d_{\Zc_r}\right\}.
% $$
From the previous results, and since we clearly have $D_\Y \leq D_\X$,
 we already know that
\begin{equation}\label{eq:Ybound}
|\Y (\F_q)| \leq \left( \sum_{i=1}^r \delta_{\Y_i}
(\pi_{d_{\Y_i}} - \pi_{2d_{\Y_i} -n})\right) + \pi_{2D_{\X} -n}.
\end{equation}
Next, for all $i\in \{1, \ldots , \ell\}$, since $\Zc_i$ is contained in 
a hyperplane, one can apply the induction hypothesis to this hyperplane and
get
\begin{equation}\label{eq:Zbound}
|\Zc_i (\F_q)| \leq B_{n-1}(\Zc_i)  = 
\delta_{\Zc_i} (\pi_{d_{\Zc_i}} - \pi_{2d_{\Zc_i} -n + 1}) + \pi_{2d_{\Zc_i} -n+1}.
\end{equation}

\begin{lem}\label{lem:trick}
  For all integers $d > 0$ and $\delta >1$, we have
$$
\delta (\pi_d - \pi_{2d-n+1}) +\pi_{2d-n+1} \leq \delta (\pi_d - \pi_{2d-n}).
$$ 
\end{lem}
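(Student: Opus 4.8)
The plan is to reduce the statement to a one-variable elementary inequality among the $\pi_j$'s. The first step is purely algebraic: expanding both sides, the terms $\delta\pi_d$ cancel, and the claimed inequality is seen to be equivalent to
\[
(\delta - 1)\,\pi_{2d-n+1} \;\geq\; \delta\,\pi_{2d-n}.
\]
Thus $d$ and $n$ enter only through the single integer $m \eqdef 2d - n$, and it remains to prove $(\delta-1)\,\pi_{m+1} \geq \delta\,\pi_m$ for every integer $m$ and every integer $\delta \geq 2$.

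Next I would distinguish two cases according to the sign of $m$. If $m < 0$, then by our convention $\pi_m = 0$, so the right-hand side vanishes, while the left-hand side is $\geq 0$ because $\delta > 1$ and every $\pi_j$ is $\geq 0$; the inequality is then immediate. If $m \geq 0$, I would use the elementary identity $\pi_{m+1} = \pi_m + q^{m+1}$ --- which follows at once from~(\ref{eq:elem}) --- to rewrite $(\delta-1)\,\pi_{m+1} \geq \delta\,\pi_m$ in the equivalent form
\[
(\delta - 1)\,q^{m+1} \;\geq\; \pi_m.
\]

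The last step is a size comparison. Since $\delta$ is an integer with $\delta > 1$ we have $\delta - 1 \geq 1$, so it suffices to check $q^{m+1} \geq \pi_m$; and this holds because $\pi_m = \frac{q^{m+1}-1}{q-1} \leq q^{m+1} - 1 < q^{m+1}$, using $q \geq 2$. This completes the argument. I do not anticipate any genuine obstacle: the only points requiring a little care are the convention $\pi_j = 0$ for $j < 0$ in the degenerate range $2d - n < 0$, and the (automatically satisfied) inequalities $\delta \geq 2$ and $q \geq 2$ that make the coefficient $\delta-1$ large enough to beat $\pi_m$.
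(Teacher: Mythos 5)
Your proof is correct and follows essentially the same route as the paper: both arguments reduce the claim to the comparison $q^{2d-n+1}\geq \pi_{2d-n}$ (the paper by computing the difference of the two sides, you by rearranging into $(\delta-1)\pi_{m+1}\geq\delta\pi_m$), and both finish with $\pi_m=\frac{q^{m+1}-1}{q-1}\leq q^{m+1}-1$. Your explicit case split on the sign of $m=2d-n$ even handles slightly more carefully the degenerate range where the identity $\pi_{m+1}-\pi_m=q^{m+1}$ would fail.
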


\begin{proof}
  Consider the difference,
$$
\begin{aligned}
  \delta (\pi_d - \pi_{2d -n })  -  \bigg(\delta (\pi_d - \pi_{2d -n + 1}) + \pi_{2d -n+1}\bigg)  
&= \delta (\pi_{2d-n+1} - \pi_{2d-n}) -  \pi_{2d-n+1}\\
&= \delta q^{2d-n+1} -\pi_{2d-n+1}\\
&= (\delta -1) q^{2d-n+1}- \pi_{2d-n}.
\end{aligned}
$$
Since $\delta >1$, to prove that this difference is nonnegative,
it is sufficient
to prove that $q^{2d-n+1} \geq \pi_{2d-n}$. It is obviously true if
$2d-n<0$. It also holds true if $2d-n \geq 0$ since, using that $q\geq 2$,
we have
$$
\pi_{2d - n } = \frac{q^{2d-n+1}-1}{q-1} \leq q^{2d-n+1} - 1.
$$
\end{proof}

From Lemma~\ref{lem:trick}, and since by assumption on
the $\Zc_i$'s, we have $\delta_{\Zc_i} >1$ for all $i\in \{1, \ldots, \ell\}$, we
see that
\begin{equation}\label{eq:trivbound}
\forall i \in \{1, \ldots , \ell\},\quad \delta_{\Zc_i} (\pi_{d_{\Zc_i}} - \pi_{2d_{\Zc_i} -n + 1})  + \pi_{2d_{\Zc_i} -n+1}
\leq \delta_{\Zc_i} (\pi_{d_{\Zc_i}} - \pi_{2d_{\Zc_i} -n }).
\end{equation}
Next, use the obvious inequality
$$
|\X (\F_q)| \leq |\Y (\F_q)| + \sum_{j=1}^{\ell} |\Zc_j (\F_q)|
$$
and put it together with the bounds (\ref{eq:Ybound}), (\ref{eq:Zbound}) and (\ref{eq:trivbound}) to get
$$
|\X (\F_q)| \leq \left(\sum_{i=1}^r \delta_{\Y_i} (\pi_{d_{\Y_i}} - \pi_{2d_{\Y_i} -n})\right) +
\left(\sum_{i=1}^r \delta_{\Zc_i} (\pi_{d_{\Zc_i}} - \pi_{2d_{\Zc_i} -n})\right)
+ \pi_{2D_{\X} - n}.
$$
This concludes the proof.

\section{Is this new bound Optimal?}

\subsection{The bound is optimal for equidimensional varieties}\label{ss:optimal}\label{ss:equidim}

We will show that the bound given by Theorem \ref{thm:main} is reached by
equidimensional arrangement of linear varieties.
This shows that Corollary \ref{cor:Lachaud} is an optimal upper bound.
For that we introduce two objects: {\em partial $d$--spreads} and 
{\em $d$--flowers}.
The notion of partial $d$--spread is well--known and subject of intense
study in finite geometry. For instance see  (the list is far from being exhaustive)
\cite{Beutelspacher,Jungnickel1984,Jungnickel1993,Storme}.
On the other hand, the terminology of $d$--flower is introduced by the author.

\begin{defn}[partial $d$--spreads]
  Let $d, n$ be two positive integers with $2d<n$.
  A {\em partial $d$--spread} is a disjoint union of linear
subvarieties of dimension $d$ of $\P^n$.
% A partial $d$--spread is a {\em $d$--spread} if it contains every point
% of $\P^n(\F_q)$.
\end{defn}

\begin{defn}[$d$--flowers]
Let $d, n$ be two positive integers with $d<n$ and $2d\geq n$.
A {\em $d$--flower} is a union $\L_1 \cup \cdots \cup \L_r$
of linear subvarieties of dimension $d$ of $\P^n$ such that there
exists a linear variety $\mathcal{M}$ of dimension $2d-n$
satisfying
$$
\forall i\neq j \in \{1, \ldots, r\},\quad \L_i \cap \L_j = \mathcal{M}.
$$  
\end{defn}

\begin{exmp}
  An $(n-1)$--flower is nothing but a hypersurface obtained as a union
  of hyperplanes meeting at a common $2$--codimensional linear variety.
  These flowers reach Serre's bound for the number of points of hypersurfaces
\cite{lettre}.
\end{exmp}

\begin{exmp}
A union of planes of $\P^4$ meeting at a single point is
a $2$--flower.
\end{exmp}

\begin{prop}
  Let $\X$ be a partial $d$--spread or a $d$--flower of degree $\delta$.
  Then
  $$
  |\X (\F_q)| = \delta (\pi_{d} - \pi_{2d-n}) + \pi_{2d-n}.
  $$
\end{prop}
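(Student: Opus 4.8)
The plan is to compute $|\X(\F_q)|$ directly in each of the two cases, using nothing more than inclusion–exclusion on the rational points of the linear components together with the recursion~(\ref{eq:bidon}) relating the $\pi_j$'s. Write $\X = \L_1 \cup \cdots \cup \L_r$, so that $\delta = r$ since each $\L_i$ is linear of dimension $d$ (hence of degree $1$), and recall $|\L_i(\F_q)| = \pi_d$.

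For the partial $d$-spread case ($2d < n$), the $\L_i$ are pairwise disjoint, so
$$
|\X(\F_q)| = \sum_{i=1}^r |\L_i(\F_q)| = r\pi_d.
$$
Since $2d-n<0$ we have $\pi_{2d-n}=0$ by the convention fixed in Section~\ref{sec:nota}, so $\delta(\pi_d - \pi_{2d-n}) + \pi_{2d-n} = r\pi_d$, which matches.

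For the $d$-flower case ($2d \geq n$), let $\M$ be the common intersection, of dimension $2d-n$, so $|\M(\F_q)| = \pi_{2d-n}$. The key combinatorial observation is that the sets $\L_i(\F_q) \setminus \M(\F_q)$ are pairwise disjoint: if a rational point lay in $\L_i \cap \L_j$ for $i \neq j$ it would lie in $\L_i \cap \L_j = \M$. Hence
$$
|\X(\F_q)| = |\M(\F_q)| + \sum_{i=1}^r \bigl(|\L_i(\F_q)| - |\M(\F_q)|\bigr) = \pi_{2d-n} + r(\pi_d - \pi_{2d-n}),
$$
which is exactly $\delta(\pi_d - \pi_{2d-n}) + \pi_{2d-n}$ with $\delta = r$.

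I do not expect any serious obstacle here; the only points needing a word of care are that the degree of such an arrangement of $r$ linear $d$-planes is indeed $r$ (additivity of degree over equidimensional components, as recalled in Section~\ref{sec:nota}), and the bookkeeping of the $\pi_{<0}=0$ convention in the spread case. One should also confirm that a $d$-flower is genuinely equidimensional of dimension $d$ so that "degree $\delta$" is unambiguous — but this is immediate from the definition. The whole proof is a two-line inclusion–exclusion in each regime.
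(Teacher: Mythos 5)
Your proof is correct and is exactly the computation the paper has in mind: the paper's own proof consists of the single sentence that the statement ``is a straightforward consequence of the definition,'' and your inclusion--exclusion in the two regimes (disjointness for a partial spread with $\pi_{2d-n}=0$, and the pairwise-common intersection $\M$ for a flower) is the straightforward consequence being alluded to. The only cosmetic remark is that the recursion~(\ref{eq:bidon}) you announce at the outset is never actually needed.
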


\begin{proof}
  It is a straightforward consequence of the definition of partial
$d$--spreads and $d$--flowers.
\end{proof}

\subsection{The non equidimensional case might have a sharper bound}
For non equidimensional varieties, the optimality of
our bound is less clear. In particular, the following statement asserts
that the upper bound of Theorem \ref{thm:main}  cannot be reached by
non equidimensional arrangements of linear varieties.

\begin{prop}\label{prop:for_linear}
  Let $\X = \L_1 \cup \cdots \cup \L_r$ be a union of 
linear subvarieties of $\P^n$ defined over $\F_q$
such that $d_{\L_1}\geq \cdots \geq d_{\L_r}$ and $r\geq 2$. 
Then,
$$
|\X (\F_q)| \leq \pi_{d_{\L_1}} +
\left(  \sum_{i=2}^r (\pi_{d_{\L_i}} - \pi_{d_{\L_i}+d_{\L_1} - n})\right). 
$$
\end{prop}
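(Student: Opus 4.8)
The plan is to induct on $r$, peeling off one linear component at a time using an inclusion–exclusion estimate, and to bound each intersection $\L_1 \cap \L_i$ from below by the dimension inequality for linear subspaces. Concretely, for a union of linear subvarieties of $\P^n$, one has the classical bound $\dim(\L_1 \cap \L_i) \geq d_{\L_1} + d_{\L_i} - n$, and since $\L_1 \cap \L_i$ is itself a linear subvariety (possibly empty, in which case $\pi_{\dim}$ is interpreted via the convention $\pi_j = 0$ for $j<0$), we get
\begin{equation*}
|(\L_1 \cap \L_i)(\F_q)| = \pi_{\dim(\L_1 \cap \L_i)} \geq \pi_{d_{\L_1} + d_{\L_i} - n},
\end{equation*}
where the monotonicity of $j \mapsto \pi_j$ is used. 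This is exactly the estimate already invoked in \S\ref{ss:ass3} (cf.\ (\ref{eq:L1capL2}) and (\ref{eq:grassman})), and the bulk of the argument is a clean bookkeeping of these terms.

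First I would treat the base case $r = 2$: by inclusion–exclusion, $|\X(\F_q)| = \pi_{d_{\L_1}} + \pi_{d_{\L_2}} - |(\L_1 \cap \L_2)(\F_q)|$, and the displayed inequality above gives $|\X(\F_q)| \leq \pi_{d_{\L_1}} + \pi_{d_{\L_2}} - \pi_{d_{\L_2} + d_{\L_1} - n}$, which is the claim. For the inductive step, write $\X = \X' \cup \L_r$ with $\X' = \L_1 \cup \cdots \cup \L_{r-1}$; then
\begin{equation*}
|\X(\F_q)| \leq |\X'(\F_q)| + |\L_r(\F_q)| - |(\L_1 \cap \L_r)(\F_q)| \leq |\X'(\F_q)| + \pi_{d_{\L_r}} - \pi_{d_{\L_r} + d_{\L_1} - n},
\end{equation*}
since $\L_1 \cap \L_r \subseteq \X' \cap \L_r$ forces $|(\X' \cap \L_r)(\F_q)| \geq |(\L_1 \cap \L_r)(\F_q)|$. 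Applying the induction hypothesis to $\X'$ (whose largest-dimensional component is still $\L_1$, as the $d_{\L_i}$ are sorted decreasingly) yields $|\X'(\F_q)| \leq \pi_{d_{\L_1}} + \sum_{i=2}^{r-1} (\pi_{d_{\L_i}} - \pi_{d_{\L_i} + d_{\L_1} - n})$, and adding the two bounds closes the induction.

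The one point requiring a little care — and the only genuine obstacle — is the step $|(\X' \cap \L_r)(\F_q)| \geq |(\L_1 \cap \L_r)(\F_q)|$, i.e.\ that when we remove the points of $\L_r$ already counted in $\X'$, we remove \emph{at least} as many as the points of $\L_1 \cap \L_r$. This is immediate from $\L_1 \subseteq \X'$, hence $\L_1 \cap \L_r \subseteq \X' \cap \L_r$, hence the inclusion on $\F_q$-points. A secondary subtlety is simply making sure the indices and the $\pi_j = 0$ convention for $j < 0$ are handled uniformly when some intersection is empty; this is already built into the notation of \S\ref{sec:nota} and needs no separate argument.
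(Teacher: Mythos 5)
Your proof is correct and is essentially the paper's argument: both rest on bounding $|\X(\F_q)|$ by $|\L_1(\F_q)|+\sum_{i\geq 2}\bigl(|\L_i(\F_q)|-|(\L_1\cap\L_i)(\F_q)|\bigr)$ and then invoking $\dim(\L_1\cap\L_i)\geq d_{\L_1}+d_{\L_i}-n$ together with the convention $\pi_j=0$ for $j<0$. The paper simply states this in one step rather than unrolling it as an induction on $r$, but the content is identical.
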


\begin{proof}
  Use the obvious upper bound
$$
|\X(\F_q)| \leq |\L_1(\F_q)| + \sum_{i=2}^r |\L_i (\F_q)| - \sum_{i=2}^r
|(\L_1 \cap \L_i)(\F_q)|
$$
together with the fact that
$$
\dim \L_1 \cap \L_i \geq d_{\L_1}+d_{\L_i} -n.
$$
\end{proof}

\begin{rem}
The construction of arrangements of linear subvarieties reaching this
upper bound can be done as follows:
\begin{enumerate}
\item choose an arbitrary $\L_1 \subseteq \P^n$ linear of dimension $d_{\L_1}$;
\item choose $\L_2$ of dimension $d_{\L_2}$
such that $\dim \L_1 \cap \L_2 = d_{\L_1}+d_{\L_2} - n$;
\item choose the $\L_i$'s so that $\L_1 \cap \L_i \subseteq \L_1 \cap \L_2$.
\end{enumerate}
\end{rem}

\begin{rem}
Compared to the upper bound of Theorem \ref{thm:main}, the above upper
bound is sharper. The difference between the bound of Theorem \ref{thm:main}
and Proposition \ref{prop:for_linear} is:
$$
\left(\sum_{i=1}^r (\pi_{d_{\L_i}} - \pi_{2d_{\L_i} - n}) \right)
+ \pi_{2d_{\L_1} -n} -  \left(\pi_{d_{\L_1}} + \sum_{i=2}^r (\pi_{d_{\L_i}}
- \pi_{d_{\L_i} + d_{\L_1} -n}) \right) = \qquad \qquad \qquad \qquad $$
$$
\qquad \qquad \qquad \qquad \qquad \qquad 
\qquad \qquad \qquad \qquad \qquad \qquad 
\sum_{i=2}^r \left(\pi_{d_{\L_i} + d_{\L_1} -n} - \pi_{2d_{\L_i} -n}\right)
$$
and the above difference is positive as soon as the variety is not
equidimensional. 
\end{rem}

Since in the equidimensional case, the upper bound is
reached by arrangement of linear varieties,
the previous observations on arrangements of linear varieties
suggests a possible sharper upper bound which we leave
as an open question.

\begin{question}
  For a projective variety $\X$ decomposed in a union of
  irreducible components $\X_1 \cup \cdots \cup \X_r$ sorted by decreasing
  dimensions, i.e. $d_{\X_1} \geq \cdots \geq d_{\X_r}$, do we have
$$
|\X(\F_q)| \leq 
\left(  \sum_{i=1}^r \delta_{\X_i}(\pi_{d_{\X_i}} - \pi_{d_{\X_i}+d_{\X_1} - n})\right) ~+~
\pi_{2d_{\X_1} -n} \ 
{\rm ?}
$$
Notice that if the degrees $\delta_{\X_i}$ are all equal to $1$
we get the upper bound of Proposition \ref{prop:for_linear}.
\end{question}

\subsection{An open question on complete intersections}
First, notice that partial spreads and flowers of dimension $<n-1$
are never complete intersections unless they are irreducible and hence of
degree $1$.
Indeed,
from \cite[Ex II.8.4, III.5.5]{H}, complete intersections are always connected
while partial spreads are not. For a flower $\mathcal{F}$,
of dimension $<n-1$ and degree $>1$, 
one can choose a linear variety $\L$ which is transverse to $\mathcal{F}$
and such that the intersection $\L \cap \mathcal{F}$ is a partial spread,
hence is disconnected. If $\mathcal{F}$ was a complete intersection, 
then $\mathcal{F}\cap \L$ would be a complete intersection too, which is
a contradiction.

Second, in \cite[\S 5]{LR}, the authors study some arrangements
of linear varieties
which are complete intersection and have a large number of points.
They call these varieties {\em tubular sets}.
They prove in particular that number of points of a tubular 
set $\X$ of degree $\delta$ and dimension $d$ equals
$$
|\X (\F_q)| = \delta q^d + \pi_{d-1}.
$$

We observed in \S \ref{ss:optimal} that, for general equidimensional
varieties, the upper bound on the number of rational points is reached
by arrangements of linear varieties.
That fact was already known for hypersurfaces
\cite{lettre}. If this property holds for complete intersections, one does
not know arrangements of linear varieties which are complete intersection
and have more points than tubular
sets. For this reason one can hope the existence of a sharper bound for 
the maximal number of points of complete intersection with respect to their
dimension and degree. This problem remains completely open.

\section*{Conclusion}

We obtained in Theorem \ref{thm:main} a new upper bound on the number of
rational points of an arbitrary closed subset of a projective space.
This bound holds even for non equidimensional varieties. In the
equidimensional case, thanks to this upper bound, we proved Ghorpade and
Lachaud's conjecture \cite{Ghorpade_Lachaud} is true and is optimal
for equidimensional varieties.

\section*{Acknowledgements}
The author expresses his gratitude to the anonymous referee for his
careful work and his relevant suggestions.

\bibliographystyle{abbrv}
\bibliography{biblio}

\end{document}